\newtheorem{theorem}{Theorem}
\newtheorem{lemma}[theorem]{Lemma}
\newtheorem{remark}{\it Remark\/}
\newenvironment{proof}{\paragraph*{\textbf{Proof}}}{\hfill$\square$}
\journal{Applied Mathematics Letters}
\begin{document}

\begin{frontmatter}

\selectlanguage{english}
\vspace{-2cm}

\title{A converse to Fortin's Lemma in Banach spaces}

\selectlanguage{english}
\author[authorlabel1]{Alexandre Ern}
\ead{ern@cermics.enpc.fr} \and
\author[authorlabel2]{Jean-Luc Guermond}
\ead{guermond@math.tamu.edu}

\address[authorlabel1]{Universit\'e Paris-Est, CERMICS (ENPC), 77455 Marne la Vall\'ee Cedex 2, France}
\address[authorlabel2]{Department of Mathematics, Texas
  A\&M University 3368 TAMU, College Station, TX 77843, USA}

\begin{abstract}
  \selectlanguage{english} The converse of Fortin's Lemma in Banach
  spaces is established in this Note.

\vskip 0.5\baselineskip

\noindent{\bf MSC2010 classification} 65N30, 65N12, 46B10\vskip 0.5\baselineskip \noindent
{

}

\end{abstract}

\begin{keyword}
Finite elements, surjective operators, inf-sup condition, duality
\end{keyword}

\end{frontmatter}

\selectlanguage{english}

\section{Introduction}

Let $V$ and $W$ be two complex Banach spaces equipped with the norms
$\|\SCAL\|_V$ and $\|\SCAL\|_W$, respectively.  We adopt the
convention that dual spaces are denoted with primes and are composed
of antilinear forms; complex conjugates are denoted by an
overline. Let $a$ be a sesquilinear form on $V\times W$ (linear
w.r.t.~its first argument and antilinear w.r.t.~its second
argument). We assume that $a$ is bounded, \ie
\begin{equation} \label{eq:a_bnd}
\|a\| := \sup_{v\in V}\sup_{w\in W} \frac{|a(v,w)|}{\|v\|_V\|w\|_W} < \infty,
\end{equation}
and that the following inf-sup condition holds:
\begin{equation} \label{eq:a_infsup}
\alpha := \inf_{v\in V}\sup_{w\in W} \frac{|a(v,w)|}{\|v\|_V\|w\|_W} > 0.
\end{equation}
Here and in what follows, arguments in infima and suprema are implicitly assumed to be  nonzero.

Let $V_h\subset V$ and $W_h\subset W$ be two finite-dimensional subspaces equipped with the norms of $V$ and $W$, respectively. A question of fundamental importance is to assert the following discrete inf-sup condition:
\begin{equation} \label{eq:a_infsup_disc}
\hat \alpha := \inf_{v_h\in V_h}\sup_{w_h\in W_h} \frac{|a(v_h,w_h)|}{\|v_h\|_V\|w_h\|_W} > 0.
\end{equation}
The aim of this Note is to prove the following result.

\smallskip

\begin{theorem}[Fortin's Lemma with converse] \label{th:fortin} Under
  the above assumptions, consider the following two statements: \berom
\item \label{item1:lemfortin} There exists a map
$\Pi_h: W\to W_h$ and a real number $\gamma_\Pi>0$ such that
$a(v_h,\Pi_hw-w)=0$, for all $(v_h,w)\in V_h\times W$, and
$\gamma_\Pi \|\Pi_hw\|_W\le \|w\|_W$ for all $w\in W$.
\item \label{item2:lemfortin} The discrete inf-sup condition
  \eqref{eq:a_infsup_disc} holds.  \eerom Then,
  \itemref{item1:lemfortin} $\Rightarrow$ \itemref{item2:lemfortin}
  with $\hat\alpha= \gamma_\Pi \alpha$.  Conversely,
  \itemref{item2:lemfortin} $\Rightarrow$ \itemref{item1:lemfortin}
  with $\gamma_\Pi = \frac{\hat\alpha}{\|a\|}$, and $\Pi_h$ can be
  constructed to be idempotent. Moreover, $\Pi_h$ can be made
  linear if $W$ is a Hilbert space.
\end{theorem}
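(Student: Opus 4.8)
The plan is to prove the two implications separately. The implication \itemref{item1:lemfortin}$\Rightarrow$\itemref{item2:lemfortin} is the classical Fortin argument and is short; the converse \itemref{item2:lemfortin}$\Rightarrow$\itemref{item1:lemfortin} is the new content and rests on a quantitative surjectivity statement in finite dimension. For \itemref{item1:lemfortin}$\Rightarrow$\itemref{item2:lemfortin}, fix a nonzero $v_h\in V_h$. Since $a(v_h,\Pi_hw-w)=0$ and $a$ is antilinear in its second argument, $a(v_h,w)=a(v_h,\Pi_hw)$ for all $w\in W$; discarding the $w$'s with $\Pi_hw=0$ and using $\gamma_\Pi\|\Pi_hw\|_W\le\|w\|_W$,
\[
\alpha\|v_h\|_V\le\sup_{w\in W}\frac{|a(v_h,w)|}{\|w\|_W}=\sup_{w\in W}\frac{|a(v_h,\Pi_hw)|}{\|w\|_W}\le\frac{1}{\gamma_\Pi}\sup_{w_h\in W_h}\frac{|a(v_h,w_h)|}{\|w_h\|_W}.
\]
Taking the infimum over $v_h\in V_h$ yields \eqref{eq:a_infsup_disc} with $\hat\alpha\ge\gamma_\Pi\alpha$; note that no linearity of $\Pi_h$ is used.

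For \itemref{item2:lemfortin}$\Rightarrow$\itemref{item1:lemfortin}, I would work with the operator $C_h:W_h\to V_h'$ defined by $(C_hw_h)(v_h)=\overline{a(v_h,w_h)}$ for all $v_h\in V_h$. It is linear (the composition of conjugation with the antilinear map $w_h\mapsto a(\SCAL,w_h)$), bounded with $\|C_h\|\le\|a\|$, and its transpose is precisely the operator $v_h\mapsto a(v_h,\SCAL)\in W_h'$, whose norm is $\sup_{w_h\in W_h}\frac{|a(v_h,w_h)|}{\|w_h\|_W}$; the discrete inf-sup condition \eqref{eq:a_infsup_disc} thus says exactly that this transpose is injective and bounded below by $\hat\alpha$. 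Since all spaces are finite-dimensional, $C_h$ is therefore surjective and, by the closed-range theorem together with the open mapping principle, admits a right inverse $R_h:V_h'\to W_h$, i.e. $C_h\circ R_h=\mathrm{id}_{V_h'}$, with $\|R_h\ell\|_W\le\hat\alpha^{-1}\|\ell\|_{V_h'}$ for all $\ell\in V_h'$.

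Next, for $w\in W$ let $\ell_w\in V_h'$ be defined by $\ell_w(v_h)=\overline{a(v_h,w)}$; then $w\mapsto\ell_w$ is linear with $\|\ell_w\|_{V_h'}\le\|a\|\,\|w\|_W$. I would set $\Pi_hw:=R_h\ell_w\in W_h$. Then $C_h(\Pi_hw)=\ell_w$, which unwinds to $a(v_h,\Pi_hw)=a(v_h,w)$ for all $v_h\in V_h$, that is $a(v_h,\Pi_hw-w)=0$; moreover $\|\Pi_hw\|_W\le\hat\alpha^{-1}\|a\|\,\|w\|_W$, which is the required bound with $\gamma_\Pi=\hat\alpha/\|a\|$ (this is positive since $\hat\alpha>0$ forces $\|a\|>0$).

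Idempotence then comes for free, and for \emph{any} choice of right inverse: if $w_h\in W_h$ then $\ell_{w_h}=C_hw_h$ directly from the definitions, whence $\ell_{\Pi_hw}=C_h(R_h\ell_w)=\ell_w$ and $\Pi_h(\Pi_hw)=R_h\ell_{\Pi_hw}=R_h\ell_w=\Pi_hw$ for every $w\in W$. Finally, if $W$ is a Hilbert space, so is $W_h$, and one may take $R_h$ to be the Moore--Penrose pseudo-inverse of $C_h$, namely $R_h\ell$ is the solution of $C_hw_h=\ell$ that is orthogonal to $\ker C_h$ in $W_h$; this is the minimal-norm solution, so the estimate $\|R_h\ell\|_W\le\hat\alpha^{-1}\|\ell\|_{V_h'}$ still holds, and $\Pi_h=R_h\circ(w\mapsto\ell_w)$ is then linear. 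I expect the only genuinely delicate point to be obtaining the right inverse with the \emph{sharp} constant $\hat\alpha^{-1}$ in the general Banach setting (where one cannot simply take an orthogonal complement of $\ker C_h$): this I would handle by passing to the bijection $\bar C_h:W_h/\ker C_h\to V_h'$ induced by $C_h$, noting that its adjoint is the descent to that quotient of the operator $v_h\mapsto a(v_h,\SCAL)$ from \eqref{eq:a_infsup_disc} and is hence bounded below by $\hat\alpha$, so that $\|\bar C_h^{-1}\|=\|(\bar C_h^{-1})^*\|=\|(\bar C_h^*)^{-1}\|\le\hat\alpha^{-1}$, and by selecting in each coset a norm-minimizing representative.
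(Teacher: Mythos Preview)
Your argument is correct and tracks the paper's proof closely: the same operator $C_h$ (the paper writes it as $A_h^*$), the same map $w\mapsto\ell_w$ (the paper's $\Theta$), the same factorisation $\Pi_h=R_h\circ(w\mapsto\ell_w)$, the same verification of idempotence, and in the Hilbert case the same orthogonal-complement/Moore--Penrose construction.

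The one genuine methodological difference is how you obtain the right inverse with the sharp bound $\|R_h\ell\|_W\le\hat\alpha^{-1}\|\ell\|_{V_h'}$ in the Banach case. The paper invokes a general lemma (its Lemma~4) valid for any surjection $B:Y\to Z$ with $Y$ reflexive: one builds a bounded map $\phi:Z\to\mathrm{im}(B^*)'$, extends each $\phi(z)$ to $Y'$ by Hahn--Banach without increasing the norm, and then uses reflexivity of $Y$ to pull back to $Y$. You instead pass to the induced bijection $\bar C_h:W_h/\ker C_h\to V_h'$, use the standard identities $\|\bar C_h^{-1}\|=\|(\bar C_h^{-1})^*\|=\|(\bar C_h^*)^{-1}\|\le\hat\alpha^{-1}$, and then select a norm-attaining representative in each coset (existence by compactness of balls in the finite-dimensional $W_h$). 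Your route is more elementary---no Hahn--Banach---but leans on finite-dimensionality for the coset-minimiser; the paper's route yields a lemma of independent interest in infinite-dimensional reflexive spaces. Both produce a generally nonlinear $R_h$, as they must, and both collapse to the same linear construction in the Hilbert setting.
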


The statement (i) $\Rightarrow$ (ii) in Theorem~\ref{th:fortin} is classical and is
known in the literature as Fortin's Lemma, see \cite{Fo77}
and~\cite[Prop.~5.4.3]{BoBrF:13}. 
It provides an effective
tool to prove the discrete inf-sup condition~\eqref{eq:a_infsup_disc}
by constructing explicitly a Fortin operator $\Pi_h$. 
We briefly outline a proof that (i) $\Rightarrow$ (ii)
for completeness. Assuming \itemref{item1:lemfortin}, we have
\begin{align*}
\sup_{w_h\in W_h} \frac{|a(v_h,w_h)|}{\|w_h\|_W}
&\geq  
\sup_{w\in W} \frac{|a(v_h,\Pi_hw)|}{\|\Pi_hw\|_W}
= 
\sup_{w\in W} \frac{|a(v_h,w)|}{\|\Pi_hw\|_W} 
\geq \gamma_\Pi
\sup_{w\in W} \frac{|a(v_h,w)|}{\|w\|_W} \geq \gamma_\Pi\alpha\, 
\|v_h\|_V,
\end{align*}
since $a$ satisfies~\eqref{eq:a_infsup} and $V_h\subset V$.
This proves~(ii) with $\hat\alpha=\gamma_\Pi\alpha$.

The converse
(ii) $\Rightarrow$ (i) is
of independent theoretical interest and is the main object of this
Note. This property is useful when it is easier to prove the
discrete inf-sup condition directly rather than constructing a Fortin
operator. Another application of current interest 
is the analysis framework for
discontinuous Petrov--Galerkin methods (dPG) recently proposed
in~\cite{CarDemGop:14} which includes the existence of a Fortin
operator among its key assumptions.
Incidentally, we observe that there is a gap in the stability constant
$\gamma_\Pi$ between the direct and converse statements, since the
ratio of the two is equal to $\frac{\|a\|}{\alpha}$ (which is
independent of the spaces $V_h$ and $W_h$).

\section{Proof of Theorem~\ref{th:fortin}}

Assume that the discrete inf-sup condition~\eqref{eq:a_infsup_disc} holds.
Let $A_h:V_h\rightarrow W_h'$ be the
operator defined by
$\langle A_hv_h,w_h\rangle_{W_h',W_h} := a(v_h,w_h)$.  
Identifying $V_h''$ with $V_h$
and $W_h''$ with $W_h$ (since these spaces are finite-dimensional), 
we consider the linear map $A_h\adj:W_h\to V_h'$. Our goal is to construct
a right-inverse map $R_{A_h\adj} : V_h'\to W_h$ (possibly nonlinear) such that, 
for all $\theta_h\in V_h'$, $A_h\adj (R_{A_h\adj}(\theta_h))
= \theta_h$ and $\hat\alpha \|R_{A_h\adj}(\theta_h)\|_W \le
\|\theta_h\|_{V_h'}$. Indeed, if such a map exists, we can consider
the linear map $\Theta : W \to V_h'$ such that,
for all $w\in W$, $\langle \Theta(w),v_h\rangle_{V_h',V_h} := \overline{a(v_h,w)}$ for all
$v_h\in V_h$. 
Then defining $\Pi_h = R_{A_h\adj}\circ \Theta : W \to W_h$ yields 
\begin{align*}
a(v_h,\Pi_h(w)) &= \langle A_h v_h,R_{A_h\adj}(\Theta(w))\rangle_{W_h',W_h} 
= \overline{\langle A_h\adj (R_{A_h\adj}(\Theta(w))),v_h\rangle_{V_h',V_h}}  \\
&= \overline{\langle \Theta(w), v_h\rangle_{V_h',V_h}} 
=  a(v_h,w),
\end{align*}
which proves that $a(v_h,\Pi_h(w)-w)=0$ for all $w\in W$. Moreover,
\begin{align*}
 \hat\alpha\|\Pi_h(w)\|_W=\hat\alpha \|R_{A_h\adj}(\Theta(w))\|_W 
\le \|\Theta(w)\|_{V_h'}
\le \|a\| \|w\|_{W},
\end{align*}
which proves that $\frac{\hat\alpha}{\|a\|} \|\Pi_h(w)\|_W
\le \|w\|_W$. In addition, we observe that
\[
\langle \Theta (R_{A_h\adj}(\theta_h)),v_h\rangle_{V_h',V_h} = \overline{\langle A_hv_h,R_{A_h\adj}(\theta_h)\rangle_{W_h',W_h}} = \langle A_h\adj(R_{A_h\adj}(\theta_h)),v_h\rangle_{V_h',V_h} = \langle \theta_h,v_h\rangle_{V_h',V_h},
\]
for all $v_h\in V_h$, which proves that $\Theta (R_{A_h\adj}(\theta_h))=\theta_h$ for all $\theta_h\in V_h'$. As a result, $\Pi_h(\Pi_h(w)) = R_{A_h\adj}(\Theta\circ R_{A_h\adj}(\Theta(w))) = R_{A_h\adj}(\Theta(w)) = \Pi_h(w)$, \ie $\Pi_h$ is idempotent. 

It remains to build the right-inverse map $R_{A_h\adj}$ to complete
the proof.  We can rewrite~\eqref{eq:a_infsup_disc} as follows:
\[
\hat \alpha = \inf_{v_h\in V_h}\sup_{w_h\in W_h} \frac{|\langle A_hv_h,w_h\rangle_{W_h',W_h}|}{\|v_h\|_V\|w_h\|_W} > 0.
\]
Let us assume first that $W$ is a Hilbert space. Let $K_h$ be the
orthogonal complement of $\KER(A_h\adj)$ in $W_h$, \ie $W_h =K_h\oplus \KER(A_h\adj)$. Observing
  that $A_h\adj : K_h \to V_h'$ is bijective, we set $R_{A_h\adj} =
  (A_{h|K_h}\adj)^{-1} : V_h'\to K_h\subset W_h$. Then $A_h\adj
  R_{A_h\adj}\theta_h =\theta_h$ for all $\theta_h\in V_h'$, by
  definition, and 
\begin{align*}
\hat\alpha & = \inf_{v_h\in V_h} \sup_{w_h\in W_h} \frac{|\langle A_hv_h,w_h\rangle_{W_h',W_h}|}{\|v_h\|_V\|w_h\|_W} 
=  \inf_{w_h\in W_h} \sup_{v_h\in V_h} \frac{|\langle A_hv_h,w_h\rangle_{W_h',W_h}|}{\|v_h\|_V\|w_h\|_W} \\
& \le \inf_{w_h\in K_h}\sup_{v_h\in V_h} \frac{|\langle A_hv_h,w_h\rangle_{W_h',W_h}|}{\|v_h\|_V\|w_h\|_W}
=\inf_{\theta_h\in V_h'}\sup_{v_h\in V_h}\frac{|\langle A_hv_h,R_{A_h\adj}\theta_h\rangle_{W_h',W_h}|}{\|v_h\|_V\|R_{A_h\adj}\theta_h\|_W}\\
&=\inf_{\theta_h\in V_h'}\sup_{v_h\in V_h}\frac{|\langle A_h\adj R_{A_h\adj}\theta_h,v_h\rangle_{V_h',V_h}|}{\|v_h\|_V\|R_{A_h\adj}\theta_h\|_W} 
= \inf_{\theta_h\in V_h'} \frac{\|\theta_h\|_{V_h'}}{\|R_{A_h\adj}\theta_h\|_W},
\end{align*}
where the first equality results from Lemma~\ref{lem:infsup_infsup}
below (this lemma provides an abstract counterpart of the fact that
the singular values of a square matrix and its transpose coincide;
this algebraic result could be invoked here directly).  This implies
that $\hat\alpha \|R_{A_h\adj}\|_{W} \le \|\theta_h\|_{V_h'}$ for all
$\theta_h\in V_h'$.
Hence $R_{A_h\adj}$ has the desired properties; note that $R_{A_h\adj}$ is linear.

In the more general setting of Banach spaces, we
set $Y:=W_h$, $Z:=V_h'$, and $B:=A_h\adj$. Identifying 
$A_h\adjadj$ with $A_h$, we obtain
\[
\hat\alpha=\inf_{z'\in Z'} \sup_{y\in Y} \frac{|\langle B\adj z', y\rangle_{Y',Y}|}{\|z'\|_{Z'} \|y\|_Y}>0.
\]
We now apply Lemma~\ref{Lem:sur2infsup} below and infer that 
there exists a right-inverse map $R_{A_h\adj} : V_h'\to W_h$ with the desired properties.

\smallskip

\begin{remark}[Linearity and uniform stability] Assume that we have at hand a
    sequence of finite-dimensional subspaces $\{V_h\}_{h\in \calH}$,
    $\{W_h\}_{h\in \calH}$.  Assume the
    existence of a decomposition $W_h=\KER(A_h\adj) \oplus K_h$ that
    is uniformly stable with respect to $h\in\calH$, \ie there is
    $\kappa>0$, independent of $h\in\calH$, such that the induced
    projector $\pi_{K_h} : W_h\to K_h$ satisfies $\kappa
    \|\pi_{K_h}w_h\|_{W}\le \|w_h\|_W$ for all $w_h\in W_h$. This
    property holds in the Hilbertian setting with $\kappa=1$. Then,
    even for Banach spaces,
    one can use the reasoning above for Hilbert spaces to build
    a linear operator $\Pi_h$ that is uniformly bounded; the only
    difference is the bound $\|R_{A_h\adj}(\theta_h)\|_{W} \le
    (\kappa\hat\alpha)^{-1}\|\theta_h\|_{V_h'}$ leading to $\gamma_\Pi
    = \frac{\kappa\hat\alpha}{\|a\|}$.
\end{remark}

\section{Operators in Banach spaces}

Let $Y$ and $Z$ be two complex Banach spaces equipped with the norms
$\|\SCAL\|_Y$ and $\|\SCAL\|_Z$, respectively. Let $B : Y \to Z$ be a
bounded linear map.

\smallskip

\begin{lemma}[Inf-sup] \label{lem:infsup_infsup}
Assume that $B$ is bijective and that $Y$ is reflexive. Then
\begin{equation}
\inf_{y \in Y} \sup_{z'\in Z'} \frac{|\langle z',By\rangle_{Z',Z}|}{\|z'\|_{Z'} \|y\|_Y} 
=
\inf_{z'\in Z'} \sup_{y\in Y} \frac{|\langle z',By\rangle_{Z',Z}|}{\|z'\|_{Z'} \|y\|_Y}.
\label{inf_sup_equal_inf_sup}
\end{equation}
\end{lemma}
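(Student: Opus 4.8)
The plan is to recognize both sides of \eqref{inf_sup_equal_inf_sup} as quantities attached to $B$ and its adjoint $B\adj$, and then to show each equals the reciprocal of the operator norm of the corresponding inverse. First I would rewrite the left-hand side: by the Hahn--Banach theorem, $\sup_{z'\in Z'} \frac{|\langle z',By\rangle_{Z',Z}|}{\|z'\|_{Z'}} = \|By\|_Z$, so the left-hand side equals $\inf_{y\in Y} \frac{\|By\|_Z}{\|y\|_Y}$. Since $B$ is bijective and bounded, the open mapping theorem gives $B^{-1}:Z\to Y$ bounded, and the usual computation yields $\inf_{y\in Y}\frac{\|By\|_Z}{\|y\|_Y} = \|B^{-1}\|_{\mathcal L(Z,Y)}^{-1}$ (substitute $z=By$, so $y=B^{-1}z$, and the infimum over $y\ne 0$ becomes an infimum over $z\ne 0$ of $\frac{\|z\|_Z}{\|B^{-1}z\|_Y}$, i.e.\ the reciprocal of a supremum).

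For the right-hand side I would similarly observe that $\langle z', By\rangle_{Z',Z} = \langle B\adj z', y\rangle_{Y',Y}$, so, again by Hahn--Banach (this time selecting $y\in Y$), $\sup_{y\in Y}\frac{|\langle B\adj z',y\rangle_{Y',Y}|}{\|y\|_Y} = \|B\adj z'\|_{Y'}$, and the right-hand side becomes $\inf_{z'\in Z'}\frac{\|B\adj z'\|_{Y'}}{\|z'\|_{Z'}}$. Here is where reflexivity of $Y$ enters: since $B$ is bijective and bounded between Banach spaces, a standard duality result (e.g.\ \cite[Thm.~2.20]{Bre:11} or the closed-range theorem) tells us that $B\adj : Z'\to Y'$ is also bijective, with $(B\adj)^{-1} = (B^{-1})\adj$. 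Then the same substitution as before gives $\inf_{z'\in Z'}\frac{\|B\adj z'\|_{Y'}}{\|z'\|_{Z'}} = \|(B\adj)^{-1}\|_{\mathcal L(Y',Z')}^{-1} = \|(B^{-1})\adj\|^{-1}$.

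To finish, I would invoke the elementary fact that for any bounded linear operator $T$ between Banach spaces one has $\|T\adj\| = \|T\|$ (which itself is just Hahn--Banach). Applying this with $T = B^{-1}$ gives $\|(B^{-1})\adj\| = \|B^{-1}\|$, so the two reciprocals coincide and \eqref{inf_sup_equal_inf_sup} follows. The role of reflexivity of $Y$ is precisely to guarantee that the adjoint arguments are symmetric — in particular that $B\adj$ is onto — without it one could only assert injectivity of $B\adj$ with closed range. The main obstacle, such as it is, is getting the bijectivity of $B\adj$ and the identity $(B\adj)^{-1}=(B^{-1})\adj$ cleanly; everything else is the routine Hahn--Banach computation of a supremum as a norm and the substitution trick converting $\inf \|Ty\|/\|y\|$ into $(\sup\|T^{-1}z\|/\|z\|)^{-1}$.
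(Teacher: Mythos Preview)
Your argument is correct and takes a different, arguably cleaner, route than the paper. The paper proves the two inequalities $l\le r$ and $r\le l$ separately: for $l\le r$ it uses surjectivity of $B$ to write any $z\in Z$ as $By_z$ with $\|y_z\|_Y\le l^{-1}\|z\|_Z$ and deduces $\|z'\|_{Z'}\le l^{-1}\|B\adj z'\|_{Y'}$; for $r\le l$ it repeats this with $B\adj$ in place of $B$, which produces a statement living in $Y''$ and then invokes reflexivity of $Y$ to descend back to $Y$. You instead identify each side directly as a reciprocal operator norm---$\|B^{-1}\|^{-1}$ on the left and $\|(B\adj)^{-1}\|^{-1}=\|(B^{-1})\adj\|^{-1}$ on the right---and conclude from $\|T\adj\|=\|T\|$.

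One point worth correcting in your write-up: the bijectivity of $B\adj$ and the identity $(B\adj)^{-1}=(B^{-1})\adj$ follow immediately from $B$ being a bounded bijection, simply by checking that $(B^{-1})\adj$ is a two-sided inverse of $B\adj$; neither reflexivity nor the closed-range theorem is needed here, and $\|T\adj\|=\|T\|$ is pure Hahn--Banach. So your proof in fact establishes the identity without the reflexivity hypothesis on $Y$, a small improvement over the paper's argument, whose detour through $Y''$ is what forces that assumption.
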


\begin{proof}
Denote by $l$ and $r$ the left- and right-hand side of
\eqref{inf_sup_equal_inf_sup}, respectively.  The left-hand side being
equal to $l$ means that $l$ is the largest number such that $\|B
y\|_{Z} \ge l\, \|y\|_{Y}$ for all $y$ in $Y$.  Let $z'\in Z'$ and $z\in
Z$.  Since $B$ is surjective, there is $y_z\in Y$ so that $B y_z=z$
and the previous statement regarding $l$ implies that $l\, \|y_z\|_{Y} \le
\|z\|_Z$. This in turn implies that
\begin{align*}
\|z'\|_{Z'} &= \sup_{z\in Z} \frac{|\langle z', z\rangle_{Z',Z}|}{\|w\|_{Z}} 
= \sup_{z\in Z} \frac{|\langle w', B y_z\rangle_{Z',Z}|}{\|z\|_{Z}} 
= \sup_{z\in Z} \frac{|\langle B\adj w', y_z\rangle_{Y',Y}|}{\|z\|_{Z}} \\
&\le \|B\adj z'\|_{Y'} \sup_{z\in Z} \frac{\|y_z\|_Y}{\|z\|_{Z}} 
\le  \frac{1}{l} \|B\adj z'\|_{Y'},
\end{align*}
which implies $l\le r$. That $r\le l$ is proved similarly by working
with $Z'$ in lieu of $Y$, $Y'$ in lieu of $Z$ and $B\adj$ in lieu of $B$ (which is also surjective). We infer that
\[
\inf_{z' \in Z'} \sup_{y''\in Y''} \frac{\langle y'' , B\adj z'\rangle_{Y'',Y'}}{\|y''\|_{Y''} \|z'\|_{Z'}} 
\le
\inf_{y''\in Y''} \sup_{z'\in Z'} \frac{\langle y'' , B\adj z'\rangle_{Y'',Y'}}{\|y''\|_{Y''} \|z'\|_{Z'}},
\]
and we conclude using the reflexivity of $Y$.
\end{proof}

\smallskip

The following result is a consequence of Banach's Open Mapping and Closed Range Theorems, see, \eg~\cite[Lem.~A.36 \& A.40]{ErnGuermond_FEM}.

\smallskip

\begin{lemma}[Surjectivity] \label{lem:surj}
The following three statements are equivalent:
\berom
\item $B: Y \rightarrow Z$ is surjective.
\item $B\adj: Z' \rightarrow Y'$ is injective
and ${\IM}(B\adj)$  is closed in $Y'$.
\item The following holds:
\begin{equation}
\beta :=\inf_{z'\in Z'} \sup_{y\in Y} \frac{\langle B\adj z', y\rangle_{Y',Y}}{\|z'\|_{Z'} \|y\|_Y}>0.
\label{inf_sup_B_surjective}
\end{equation}
\eerom
\end{lemma}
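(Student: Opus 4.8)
The plan is to prove the three implications in the cyclic order (i)~$\Rightarrow$~(iii)~$\Rightarrow$~(ii)~$\Rightarrow$~(i): the first step rests on the Open Mapping Theorem, the last on Banach's Closed Range Theorem, and the middle one is elementary. Note that, in contrast with Lemma~\ref{lem:infsup_infsup}, no reflexivity assumption on $Y$ or $Z$ is needed here.

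For (i)~$\Rightarrow$~(iii), I would reproduce the estimate already used in the proof of Lemma~\ref{lem:infsup_infsup}. Since $B$ is a bounded surjective linear map between Banach spaces, the Open Mapping Theorem provides a constant $c>0$ such that every $z\in Z$ admits a preimage $y_z\in Y$ with $By_z=z$ and $\|y_z\|_Y\le c^{-1}\|z\|_Z$. Then, for every $z'\in Z'$,
\[
\|z'\|_{Z'} = \sup_{z\in Z}\frac{|\langle z', z\rangle_{Z',Z}|}{\|z\|_Z} = \sup_{z\in Z}\frac{|\langle B\adj z', y_z\rangle_{Y',Y}|}{\|z\|_Z} \le \|B\adj z'\|_{Y'}\sup_{z\in Z}\frac{\|y_z\|_Y}{\|z\|_Z} \le c^{-1}\|B\adj z'\|_{Y'},
\]
and since $\|B\adj z'\|_{Y'}=\sup_{y\in Y}|\langle B\adj z', y\rangle_{Y',Y}|/\|y\|_Y$, this yields $\beta\ge c>0$.

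For (iii)~$\Rightarrow$~(ii), the inequality $\beta>0$ says precisely that $B\adj$ is bounded from below, \ie $\|B\adj z'\|_{Y'}\ge \beta\,\|z'\|_{Z'}$ for all $z'\in Z'$. This forces $B\adj$ to be injective, and it forces $\IM(B\adj)$ to be closed: if $B\adj z'_n\to\eta$ in $Y'$, then $(B\adj z'_n)_n$ is Cauchy, hence so is $(z'_n)_n$ by the lower bound, hence $z'_n\to z'$ for some $z'\in Z'$, and $B\adj z'=\eta$ by continuity.

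For (ii)~$\Rightarrow$~(i), I would invoke Banach's Closed Range Theorem: since $\IM(B\adj)$ is closed in $Y'$, the range of $B$ is closed in $Z$ and $\IM(B)$ equals the annihilator of $\KER(B\adj)$ in $Z$; as $B\adj$ is injective, this annihilator is the whole of $Z$, so $B$ is surjective. The only genuinely nontrivial ingredients are the Open Mapping and Closed Range Theorems --- which is precisely why the statement is attributed to them --- and once they are granted I do not expect a real obstacle; the single point deserving a little care is keeping track of the moduli and of the $Y'$/$Z'$ dual pairings in the estimate for (i)~$\Rightarrow$~(iii), just as in the proof of Lemma~\ref{lem:infsup_infsup}.
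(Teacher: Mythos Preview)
Your argument is correct and matches the paper's own treatment: the paper does not spell out a proof of this lemma but simply attributes it to Banach's Open Mapping and Closed Range Theorems with a reference, and your cyclic chain (i)~$\Rightarrow$~(iii) via the Open Mapping Theorem, (iii)~$\Rightarrow$~(ii) by the elementary bounded-below argument, and (ii)~$\Rightarrow$~(i) via the Closed Range Theorem is exactly the intended route.
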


Assume that $B$ is surjective, \ie the inf-sup condition
\eqref{inf_sup_B_surjective} holds. We now show that it is possible to
construct a right-inverse map of $B$ and to control its norm by the
inf-sup constant $\beta$.

\smallskip

\begin{lemma}[Right inverse]
\label{Lem:sur2infsup}
Assume that \eqref{inf_sup_B_surjective} holds and that $Y$ is reflexive. 
Then there is a right-inverse map $R_B:Z\to Y$ such that 
\begin{equation} \label{B:surj}
\forall z\in Z, \quad B(R_B(z))=z \quad \text{and}\quad \beta\|R_B(z)\|_Y \le \|z\|_Z.
\end{equation}
Moreover, it is possible to construct a linear map $R_B$ if $Y$ is a Hilbert space.
\end{lemma}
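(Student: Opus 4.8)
The plan is to reduce the statement to Lemma~\ref{lem:infsup_infsup} by passing to a quotient, read off the sharp quotient-norm estimate, and then exploit the reflexivity of $Y$ to convert that estimate into an actual right-inverse map via a minimal-norm selection. First I would set $N := \KER(B)$, which is closed in $Y$ since $B$ is bounded, and work with the quotient $Y/N$, which is reflexive (a quotient of a reflexive space by a closed subspace is reflexive). The map $B$ factors through the canonical surjection $Y\to Y/N$ as a bounded linear \emph{bijection} $\bar B : Y/N \to Z$. Identifying $(Y/N)'$ with $N^\perp\subset Y'$, one checks that $\bar B\adj z' = B\adj z'$ for every $z'\in Z'$ (recall $\IM(B\adj)\subset N^\perp$), with $\|\bar B\adj z'\|_{(Y/N)'} = \|B\adj z'\|_{Y'}$.

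Next I would apply Lemma~\ref{lem:infsup_infsup} to $\bar B$, which is legitimate because $\bar B$ is bijective and $Y/N$ is reflexive. Using the identifications above, the right-hand inf-sup in \eqref{inf_sup_equal_inf_sup} equals $\inf_{z'\in Z'} \|B\adj z'\|_{Y'}/\|z'\|_{Z'} = \beta$, while the left-hand inf-sup equals $\inf_{\bar y\in Y/N} \|\bar B\bar y\|_Z/\|\bar y\|_{Y/N}$. Hence $\|\bar B\bar y\|_Z \ge \beta\,\|\bar y\|_{Y/N}$ for all $\bar y$; equivalently, for every $z\in Z$, $\inf\{\|y\|_Y : B y = z\} = \|\bar B^{-1}z\|_{Y/N} \le \beta^{-1}\|z\|_Z$. (Alternatively, this estimate follows directly from Lemma~\ref{lem:surj} together with the duality between the quotient norm on $Y/N$ and the dual norm on $N^\perp$, without invoking Lemma~\ref{lem:infsup_infsup}.)

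It remains to produce a genuine map realizing this bound, and here I expect the only real subtlety to lie: in an arbitrary Banach space a closed affine subspace need not contain a point of least norm, so reflexivity is essential. For each $z\in Z$ the fibre $B^{-1}(\{z\})$ is nonempty (surjectivity of $B$), closed and convex; by reflexivity of $Y$ it therefore contains an element of minimal $\|\SCAL\|_Y$-norm (bounded sequences admit weakly convergent subsequences and the norm is weakly lower semicontinuous). I would define $R_B(z)$ to be one such minimal-norm element, with $R_B(0)=0$; the selection invokes the axiom of choice and is precisely why $R_B$ is in general only a map, possibly nonlinear and discontinuous. Then $B(R_B(z))=z$ and $\|R_B(z)\|_Y = \inf\{\|y\|_Y : By=z\}\le\beta^{-1}\|z\|_Z$, i.e.\ \eqref{B:surj} holds.

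Finally, if $Y$ is a Hilbert space I would instead take $R_B := (B|_{N^\perp})^{-1}$, where $N^\perp$ is the orthogonal complement of $N=\KER(B)$: since $B$ is surjective and $Y = N\oplus N^\perp$, the restriction $B|_{N^\perp}:N^\perp\to Z$ is a bounded linear bijection, so $R_B$ is linear, and the element of $N^\perp$ lying over $z$ is exactly the now unique minimal-norm element of $B^{-1}(\{z\})$, so $\beta\|R_B(z)\|_Y\le\|z\|_Z$ is preserved.
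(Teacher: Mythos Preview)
Your argument is correct and takes a genuinely different route from the paper's. The paper works on the dual side: it inverts $B\adj$ on its range $\calH:=\IM(B\adj)\subset Y'$ to obtain a bounded linear map $R_{B\adj}:\calH\to Z'$, uses it to define a linear map $\phi:Z\to\calH'$, extends each $\phi(z)$ to all of $Y'$ via the Hahn--Banach theorem (this is the nonlinear step), and then pulls the extension back to $Y$ through the reflexivity isomorphism $J_Y^{-1}:Y''\to Y$. You instead stay on the primal side: you factor $B$ through the quotient $Y/\KER(B)$, invoke Lemma~\ref{lem:infsup_infsup} (applied to the bijection $\bar B$ on the reflexive space $Y/N$) to convert the adjoint inf-sup constant $\beta$ into the sharp lower bound $\|\bar B\bar y\|_Z\ge\beta\,\|\bar y\|_{Y/N}$, and then exploit reflexivity via weak compactness of closed bounded convex sets to select a norm-minimizing preimage (the nonlinearity now arising from possible non-uniqueness of the minimizer). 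Your route is more geometric and yields the extra information that $R_B(z)$ can always be taken norm-minimizing in its fibre; the paper's route is independent of Lemma~\ref{lem:infsup_infsup} and isolates the nonlinearity in a single Hahn--Banach extension, which becomes linear in the Hilbert case by extending by zero on the orthogonal complement of $\calH$ in $Y'$. Your Hilbert-space construction $R_B=(B|_{N^\perp})^{-1}$ is precisely the one the paper employs directly (with $B=A_h\adj$) in the Hilbert part of the proof of Theorem~\ref{th:fortin}.
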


\begin{proof}
This result can be found in \cite[Lem.~A.42]{ErnGuermond_FEM}; for completeness,
we present a proof.
The inf-sup condition~\eqref{inf_sup_B_surjective} 
implies that $B\adj$ is injective (see Lemma~\ref{lem:surj}(ii)). Let us set $\calH:= \IM(B\adj)\subset Y'$ equipped with the norm of $Y'$.
Let $R_{B\adj} : \calH \to Z'$ be such that, for all 
$y'\in \calH$, $B\adj(R_{B\adj}(y')) = y'$. 
$B\adj$ being injective, $R_{B\adj}$ is uniquely defined and is a linear map;
notice also that $R_{B\adj}(B\adj(z'))=z'$ for all $z'\in Z'$ since 
$R_{B\adj}(B\adj(z'))-z'$ is in $\KER(B\adj)=\{0\}$. 
Moreover, the inf-sup condition~\eqref{inf_sup_B_surjective} implies that
$\|R_{B\adj}(y')\|_{Z'} \le \beta^{-1} \|y'\|_{Y'}$.
We next define the linear map $\phi : Z \to \calH'$ such that,
for all $z\in Z$, $\langle \phi(z),y'\rangle_{\calH',\calH} =
\overline{\langle R_{B\adj}(y'), z \rangle_{Z',Z}}$ for all $y'\in\calH$.
We infer that 
\[
\begin{aligned}
|\langle \phi(z),y'\rangle_{\calH',\calH}|
\leq
\|R_{B\adj}(y')\|_{Z'} \|z\|_Z
\leq&
\beta^{-1} \|y'\|_{Y'} \|z\|_Z.
\end{aligned}
\]
This means that $\phi(z)$ is
bounded on $\calH\subset Y'$ with $\|\phi(z)\|_{\calH'} \le \beta^{-1}\|z\|_Z$.
Owing to the Hahn--Banach Theorem in complex Banach spaces (see \cite[Prop.~11.23]{Brezis:11}), 
$\phi(z)$ can be extended to $Y'$ with the same norm. 
Let $\calE(\phi(z))\in Y''$ be the extension in question
with $\|\calE(\phi(z))\|_{Y''} \leq \beta^{-1} \|z\|_Z$.
Since $Y$ is reflexive, the canonical isometry $J_Y:Y\to Y''$ is a linear isomorphism.
Let us set $R_B(z):= J_Y^{-1}(\calE(\phi(z)))$; notice that
$J_Y(R_B(z))=\calE(\phi(z))$.
Then, the following holds for all $z'\in Z'$:
\begin{align*}
\langle z', B(R_B(z))\rangle_{Z',Z}
&=\langle B\adj z', R_B(z)\rangle_{Y',Y}
=\overline{\langle J_Y(R_B(z)), B\adj z' \rangle_{Y'',Y'}} 
=\overline{\langle \calE(\phi(z)), B\adj z' \rangle_{Y'',Y'}} \\
&=\langle \phi(z), B\adj z' \rangle_{\calH',\calH} 
=\langle R_{B\adj}(B\adj z'), z \rangle_{Z',Z} 
=\langle z', z \rangle_{Z',Z},
\end{align*}
showing that $B(R_B(z)) = z$. Moreover,
$\|R_B(z)\|_{Y}
=
\|J_Y(R_B(z))\|_{Y''}
=
\|\calE(\phi(z))\|_{Y''}
= \|\phi(z)\|_{\calH'}
\leq
\beta^{-1}\|z\|_Z$,
showing that \eqref{B:surj} holds. In conclusion, $R_B = J_Y^{-1} \circ \calE \circ \phi$, where the first and third maps are linear. If $Y$ is a Hilbert 
space, then
the extension map $\calE$ is also linear; it suffices to extend linear forms
on $\calH$ by zero on the orthogonal complement of $\calH$ in $Y'$.
\end{proof}

\bibliographystyle{plain} 
\bibliography{biblio}

\end{document}